\newcommand{\K}{{\cal{K}}}
\newcommand{\mk}{{\mathrm{k}}}
\newcommand{\md}{{\mathrm{d}}}
\newcommand{\J}{{\mathrm{J}}}
\newcommand{\lr}{\longrightarrow}
\newcommand{\om}{\omega}
\newcommand{\esp}{\quad\mbox{and}\quad}
\newcommand{\X}{{\cal X}}
\newcommand{\R}{\mathbb{R}}
\newcommand{\G}{{\mathfrak{g}}}
\newcommand{\ad}{{\mathrm{ad}}}
\newcommand{\tr}{{\mathrm{tr}}}
\newcommand{\B}{{\cal B}}
\newcommand{\na}{\nabla}
\newtheorem{theo}{Theorem}[section]
\newtheorem{pr}{Proposition}[section]
\newtheorem{Le}{Lemma}[section]
\newtheorem{co}{Corollary}[section]
\newtheorem{exem}{Example}
\newtheorem{remark}{Remark}
\newtheorem{conclu}{Conclusion}
\title{Balanced Hermitian structures on  twisted cartesian products}
\author{M. W. Mansouri, A. Oufkou\\\\
mansouriwadia@gmail.com\\
 ahmed.oufkou@uit.ac.ma}
\begin{document}
	\maketitle
	\begin{abstract} We study Hermitian structures on the twisted cartesian product \newline  $(\G_{(\rho_1,\rho_2)},\J,\K)$  of two Hermitian Lie algebras according to two representations $\rho_1$ and $\rho_2$. We give the conditions on  $(\G_{(\rho_1,\rho_2)},\J,\K)$ to be balanced and locally conformally balanced. As an application,  we  classify  six-dimensional balanced Hermitian twisted cartesian products Lie algebras. 
	\end{abstract}
	
	\section{Introduction}
	The twisted product structure of Lie algebras defined by means of linear representations	is a well known construction which can be regarded as the generalization of the semidirect product of algebras, see for instance \cite{R}. By using this construction we can obtain examples of some special Hermitian metrics in Lie algebras.
	
	In literature, a Hermitian structure on a $2n$-dimensional smooth manifold $M$ is a pair $(\J,\K)$ where $\J$ is an integrable almost complex structure  which is compatible with a Riemannian metric $\K$ on $M$, namely $\K(\J .\,,\J .)=\K(.\,,.)$.
	The fundamental form is given by $\om(.\,,.)=\K(\J .\,,.)$
	and the Lee form is defined by $\theta=\J \md^*\om=-\md^*\om \circ \J$. A fundamental class of Hermitian metrics is provided by the K\"ahler metrics by means $\md\om=0$.
	\vfill
	\line(1,0){320}
	\newline
	2000 MSC. 22E25,\; 53C55,\; 53C15,\; 16W25.
	\newline
	\textit{key words}: Hermitian metrics,\, Balanced structures,\; Solvable Lie algebras,\; Twisted cartesian products.
	\newpage
	In literature, many generalizations of 	the K\"ahler condition have been introduced. Indeed, $(M,\J,\K)$ is called:
	\begin{itemize}
		\item Balanced  if $\theta=0$ .
		\item Locally conformally balanced (shortly, LCB) if $\md\theta=0$.
		\item Locally conformally K\"ahler (shortly, LCK) if $\md \om=\theta \wedge \om  $ where the Lee form $\theta$ is a closed 1-form.
	\end{itemize}	
	For general results about these generalized K\"ahler metrics, we refer the reader to \cite{AF}, \cite{B}, \cite{MG}.
	
	The aim of this paper is to study Hermitian structures on the twisted cartesian product  $(\G_{(\rho_1,\rho_2)},\J,\K)$. We give the conditions on  $(\G_{(\rho_1,\rho_2)},\J,\K)$ to be balanced, locally conformally balanced and K\"ahlerian.
	
	The article is organized as follows: In section $2$  we give the framework of the Hermitian Lie algebra  $(\G_{(\rho_1,\rho_2)},\J,\K)$. In particular, we provide a formula of the Lee form $\theta$ and we investigate it to know when the Hermitian twisted cartesian  product structure becomes balanced, LCB and K\"ahlerian. We also give  a method to construct balanced Hermitian Lie algebras from two Hermitian Lie algebras and apply it in section $3$.
	
	\textbf{Notation.}
	Using the Salamon notation, we write structure equations for Lie algebras:\newline e.g. $\mathfrak{rr}_{3,1}=(0, -12, -13, 0)$ fixing a coframe  $(e^1,e^2,e^3,e^4)$ for $\mathfrak{rr}_{3,1}^*$ that means $\md e^1=\md e^4=0$ and $\md e^2=-e^1\wedge e^2=-e^{12}$, $\md e^3=-e^1\wedge e^3=-e^{13}.$
	\section{Hermitian  twisted cartesian products}\label{section 2}
	Let $(\G_1,\J_1,\mk_1)$ and $(\G_2,\J_2,\mk_2)$ be two Hermitian Lie algebras for which there exist two linear Lie algebra representations
	\begin{equation*}
		\rho_1 : \G_1\lr Der(\G_2)\esp \rho_2 : \G_2\lr Der(\G_1).
	\end{equation*}
	We will say that $(\rho_1,\rho_2)$ is a representation compatible couple if  
	\begin{align}
		\rho_1(\rho_2(a)x)b&=\rho_1(\rho_2(b)x)a,\\
		\rho_2(\rho_1(x)a)y&=\rho_2(\rho_1(y)a)x,
	\end{align}	
	for any $x$, $y\in\G_1$ and $a$, $b\in\G_2$.
	
	The non-zero Lie brackets $[.,.]$ on $\G_{(\rho_1,\rho_2)}$ are defined on the vector space $\G_1 \oplus \G_2$ (direct sum of $\G_1$ and $\G_2$) by
	\begin{align*}
		[x,y]	&=[x,y]_1,\qquad\qquad\quad x,y\in\G_1\\
		[a,b]	&=[a,b]_2,\qquad\qquad\quad a,b\in\G_2\\
		[x,a]&=\rho_1(x)a-\rho_2(a)x,\quad x\in\G_1,\; a\in\G_2.
	\end{align*}
	
	A direct calculation shows that  $(\G_{(\rho_1,\rho_2)},[.\, ,\,.])$ is a Lie algebra if and only if $\rho_1$ and $\rho_2$ are compatible. The Lie algebra $(\G_{(\rho_1,\rho_2)},[.\, ,\,.])$  is called twisted cartesian
	product of $\G_1$ and $\G_2$ according to the representation compatible couple $(\rho_1,\rho_2)$ also noted $\G_1 \Join \G_2$ (see \cite{R} and \cite{V} for more details).
	\begin{remark} If $\rho_2=0$ then the twisted cartesian product becomes the semi-direct product of $\G_1$ and $\G_2$ by  $\rho_1$ and vice-versa. 
	\end{remark} 
	We let $\K$ be the scalar product on $\G_{(\rho_1,\rho_2)}$ which is defined by $\K(x+a,y+b)=\mk_1(x,y)+\mk_2(a,b)$, the  almost-complex structure $\J$ on $\G_{(\rho_1,\rho_2)}$ verify $\J(x+a)=\J_1(x)+\J_2(a)$	for $x,y\in\G_1$ and $a,b\in\G_2$.	Clearly, $\J$ and $\K$ are compatible i.e., $\K(\J .,\J .)=\K(.,.)$. The following proposition gives a necessary and sufficient condition for $\J$ to be integrable.
	\begin{pr}
		Let $(\G_1,\J_1,\mk_1)$ and $(\G_2,\J_2,\mk_2)$ be two Hermitian Lie algebras and let $(\rho_1,\rho_2)$  a representation compatible couple. Then, 	$(\G_{(\rho_1,\rho_2)},\om,\J)$ is a \textbf{Hermitian} Lie algebra if and only if
		\begin{align}
			[\rho_1(\J_1(x)),\J_2]&=\J_2\circ\rho_1(x)\circ\J_2+\rho_1(x),\qquad \forall x\in\G_1\label{com1}\\
			[\rho_2(\J_2(a)),\J_1]&=\J_1\circ\rho_2(a)\circ\J_1+\rho_2(a),\qquad \forall a\in\G_2. \label{com2}
		\end{align}
	\end{pr}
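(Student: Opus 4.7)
The plan is to reduce the proposition to the verification that $\J$ is integrable, since its compatibility with $\K$ follows immediately from the block-diagonal definitions: for $x+a,y+b\in\G_1\oplus\G_2$ one has $\K(\J(x+a),\J(y+b))=\mk_1(\J_1x,\J_1y)+\mk_2(\J_2a,\J_2b)=\K(x+a,y+b)$. Integrability is the vanishing of the Nijenhuis tensor
$$N_\J(u,v) = [\J u,\J v] - \J[\J u,v] - \J[u,\J v] - [u,v],$$
which, since $N_\J$ is skew and $\R$-bilinear, needs only to be tested on pairs drawn from $\G_1\cup\G_2$.

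First I would observe that $\G_1$ and $\G_2$ are Lie subalgebras of $\G_{(\rho_1,\rho_2)}$ stable under $\J$, and that the bracket on each of them coincides with $[\cdot,\cdot]_i$. Consequently $N_\J$ restricted to $\G_i\times\G_i$ coincides with $N_{\J_i}$, which vanishes by the Hermitian hypothesis on $(\G_i,\J_i,\mk_i)$. Thus only the mixed case $u=x\in\G_1$, $v=a\in\G_2$ requires analysis.

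Next I would plug the mixed bracket $[x,a]=\rho_1(x)a-\rho_2(a)x$ into the four terms of $N_\J(x,a)$. Using $\J=\J_1\oplus\J_2$ and noting that $\rho_1(x)a\in\G_2$ and $\rho_2(a)x\in\G_1$, the expansion naturally splits into a $\G_1$-component and a $\G_2$-component. After regrouping, the $\G_2$-component reads
$$\bigl[\rho_1(\J_1 x),\J_2\bigr](a)-\J_2\,\rho_1(x)\,\J_2(a)-\rho_1(x)(a),$$
and its vanishing for all $a$ is exactly condition (\ref{com1}). Symmetrically, the $\G_1$-component reads
$$-\bigl[\rho_2(\J_2 a),\J_1\bigr](x)+\J_1\,\rho_2(a)\,\J_1(x)+\rho_2(a)(x),$$
whose vanishing for all $x$ is exactly (\ref{com2}). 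This gives the desired equivalence in both directions, as $N_\J$ vanishes on all of $\G_{(\rho_1,\rho_2)}\times\G_{(\rho_1,\rho_2)}$ iff both components do.

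The only real obstacle is bookkeeping: tracking signs and projecting each of the eight resulting terms onto the correct summand $\G_1$ or $\G_2$ when expanding $N_\J(x,a)$. Once the expansion is laid out carefully, the two integrability conditions appear on the nose, and no further argument is needed.
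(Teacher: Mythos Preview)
Your proposal is correct and follows essentially the same approach as the paper: both reduce the integrability of $\J$ to the mixed Nijenhuis expression $N_\J(x,a)$ (using that $N_{\J_i}=0$ on $\G_i\times\G_i$), expand it via the twisted bracket, and split the result into its $\G_1$- and $\G_2$-components to read off conditions \eqref{com1} and \eqref{com2}. The only difference is presentational---you also spell out the trivial compatibility $\K(\J\cdot,\J\cdot)=\K(\cdot,\cdot)$, which the paper had already noted prior to the proposition.
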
\begin{proof}
		Using that $\J_1$ and $\J_2$ are integrable and the bilinearity of the Nijenhuis operator, the completeness of $\J$ reduces to $N_J(x,a)=0$ for all $x\in\G_1$ and $a\in\G_2$.
		\begin{align*}
			N_\J(x,a)&=[\J_1x,\J_2a]-[x,a]-\J([\J_1x,a]+[x,\J_2a])\\
			&=\rho_1(\J_1x)\J_2a-\rho_2(\J_2a)\J_1x-\rho_1(x)a+\rho_2(a)x\\&\quad-\J_2(\rho_1(\J_1x)a)+\J_1(\rho_2(a)\J_1x)-\J_2(\rho_1(x)\J_2a)+\J_1(\rho_2(\J_2a)x)\\
			&=\Big([\J_1,\rho_2(\J_2a)]x+\rho_2(a)x+\J_1(\rho_2(a)\J_1x)\Big)+\Big([\rho_1(\J_1x),\J_2]a\\ &\hspace*{3mm}-\rho_1(x)a-\J_2(\rho_1(x)\J_2a)\Big)
		\end{align*}
		and the result follows.		
	\end{proof}		
	\begin{remark} 	
		The conditions \eqref{com1} and \eqref{com2} are satisfied if $\rho_1(x)$ and $\J_2$  (resp; $\rho_2(a)$ and $\J_1$)  commute, for all $x\in\G_1$ (resp; $a\in\G_2$).
	\end{remark}	
	The Hermitian Lie algebra  $(\G_{(\rho_1,\rho_2)},\J,\K)$ is  called   Hermitian twisted cartesian product  of two Hermitian Lie algebras  according to two representations $\rho_1$ and $\rho_2$, or simply Hermitian twisted product if there is no confusion. Moreover, we consider in what follows  that $\rho_1(x)$ and $\J_2$  (resp; $\rho_2(a)$ and $\J_1$)  commute for all $x\in\G_1$ (resp; $a\in\G_2$).
	\begin{Le}\label{le cnx}  The Levi-Civita product associated to $(\G_{(\rho_1,\rho_2)},\K)$ is given by:	
		\begin{align*}
			\na_{x}a&=\frac{1}{2}((\rho_1(x)-\rho^*_1(x))a-(\rho_2(a)+\rho^*_2(a))x),\\
			\na_{a}x&=\frac{1}{2}((\rho_2(a)-\rho^*_2(a))x-(\rho_1(x)+\rho^*_1(x))a),\\
			\na_xy&=\overset{1}{\na}_xy, \quad
			\na_ab=\overset{2}{\na}_ab,
		\end{align*}
		for any $x,y\in \G_1$ and $a,b\in \G_2$, 	
	\end{Le}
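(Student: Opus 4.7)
The plan is to apply the Koszul formula
\[
2\K(\na_u v,w)=\K([u,v],w)-\K([v,w],u)+\K([w,u],v),\qquad u,v,w\in\G_{(\rho_1,\rho_2)},
\]
valid on any Lie algebra with an invariant scalar product, and read off the four cases
$(\na_xy,\na_ab,\na_xa,\na_ax)$ by inserting test vectors from $\G_1$ and from $\G_2$ separately, using the bracket rules of $\G_{(\rho_1,\rho_2)}$ together with the orthogonal decomposition $\K(\cdot,\cdot)=\mk_1+\mk_2$.

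First, if $u,v,w$ all lie in $\G_1$, then $[u,v]=[u,v]_1$, $[v,w]=[v,w]_1$, $[w,u]=[w,u]_1$, all of which stay in $\G_1$; by orthogonality of $\G_1$ and $\G_2$, the Koszul formula collapses to the Koszul formula for $(\G_1,\mk_1)$, giving $\na_xy=\overset{1}{\na}_xy$. The same argument on $\G_2$ yields $\na_ab=\overset{2}{\na}_ab$.

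Next, to compute $\na_xa$ for $x\in\G_1$, $a\in\G_2$, I would test it first against an arbitrary $y\in\G_1$ and then against an arbitrary $b\in\G_2$. In the $y$-test, $\K([x,a],y)=-\mk_1(\rho_2(a)x,y)$, $\K([a,y],x)=\mk_1(\rho_2^*(a)x,y)$, and $\K([y,x],a)=0$, producing the $\G_1$-component $-\tfrac12(\rho_2(a)+\rho_2^*(a))x$. In the $b$-test, $\K([x,a],b)=\mk_2(\rho_1(x)a,b)$, $\K([a,b],x)=0$, and $\K([b,x],a)=-\mk_2(\rho_1^*(x)a,b)$, producing the $\G_2$-component $\tfrac12(\rho_1(x)-\rho_1^*(x))a$. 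Adding the two components gives the stated formula for $\na_xa$, and the formula for $\na_ax$ follows by the symmetric computation (or equivalently from the torsion-free identity $\na_ax=\na_xa-[x,a]$ once one checks the $\G_1$/$\G_2$-splitting is consistent).

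There is no real obstacle here: the work is purely bookkeeping, the only subtlety being to carefully track which summand of $[u,v]=\rho_1(x)a-\rho_2(a)x$ lies in $\G_2$ and which lies in $\G_1$, so that the scalar product $\K$ picks out the correct $\mk_i$. The appearance of the adjoints $\rho_i^*$ comes precisely from moving $\rho_i$ across $\mk_i$ in the two ``mixed'' terms of the Koszul formula.
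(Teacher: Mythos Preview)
Your approach is essentially identical to the paper's: both apply the Koszul formula for a left-invariant metric and read off the four cases by testing against elements of $\G_1$ and of $\G_2$, using the orthogonal splitting $\K=\mk_1\oplus\mk_2$ and the definition of the mixed bracket $[x,a]=\rho_1(x)a-\rho_2(a)x$. The only cosmetic differences are that the paper tests $\na_xb$ against a general $(z,c)$ before specializing (rather than against $y$ and $b$ separately as you do), and that you mention the torsion-free shortcut for $\na_ax$; one small wording slip is that the Koszul formula you quote holds for any left-invariant metric, not only an ``invariant'' (i.e.\ $\mathrm{ad}$-invariant) one.
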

	\begin{proof}
		The Koszul formula for the Levi-Civita connection in the invariant setting, is given by:
		\begin{align*}
			2\K(\na_{(x,a)}(y,b),(z,c))	&=\K([(x,a),(y,b)],(z,c))-\K([(y,b),(z,c)],(x,a))\\
			&\hspace*{3mm} -\K([(x,a),(z,c)],(y,b)).
		\end{align*}
		For $(x,a)=(x,0)$ and $(y,b)=(y,0)$ and $(z,c)=(z,0)$, we have
		\begin{align*}
			2\K(\na_{(x,0)}(y,0),(z,0))	&=\K([(x,0),(y,0)],(z,0))-\K([(y,0),(z,0)],(x,0))\\
			&\hspace*{3mm}-\K([(x,0),(z,0)],(y,0))\\
			&=\mk_1([x,y]_1,z)-\mk_1([y,z]_1,x)-\mk_1([x,z]_1,y)\\
			2\K(\na_{(x,0)}(y,0),(z,0))&=2\mk_1(\overset{1}{\na}_xy,z)
		\end{align*}
		and therefore
		\begin{equation*}
			\K(\na_xy,z)=\mk_1(\overset{1}{\na}_xy,z)=\K(\overset{1}{\na}_xy,z).
		\end{equation*}
		Hence $\na_xy=\overset{1}{\na}_xy$. A similar calculation show that
		\begin{equation*}
			2\K(\na_{(0,a)}(0,b),(0,c))=2\mk_2(\overset{2}{\na}_ab,c)
		\end{equation*}
		and therefore
		\begin{equation*}
			\K(\na_ab,c)=\mk_2(\overset{2}{\na}_ab,c)=\K(\overset{2}{\na}_ab,c).
		\end{equation*}
		Hence $\na_ab=\overset{2}{\na}_ab.$
		
		For $(x,a)=(x,0)$ and $(y,b)=(0,b)$, we have
		\begin{align*}
			2\K(\na_{(x,0)}(0,b),(z,c))	&=\K([(x,0),(0,b)],(z,c))-\K([(0,b),(z,c)],(x,0))\\
			&\hspace*{3mm}-\K([(x,0),(z,c)],(0,b))\\
			&=\K(\rho_1(x)b-\rho_2(b)x,(z,c))-\mk_1(\rho_2(b)z,x)\\
			&\hspace*{3mm}-\mk_2(\rho_1(x)c,b)\\
			&=\mk_2(\rho_1(x)b,c)-\mk_1(\rho_2(b)x,z)-\mk_1(\rho_2(b)z,x)\\
			&\hspace*{3mm}-\mk_2(\rho_1(x)c,b)\\
			&=\mk_2((\rho_1(x)-\rho^*_1(x))b,c)-\mk_1((\rho_2(b)+\rho^*_2(b))x,z).
		\end{align*}
		Replacing $(z,c)$ by $(z,0)$, then by $(0,c)$ we find
		\begin{equation*}
			2\na_xb=(\rho_1(x)-\rho^*_1(x))b-(\rho_2(b)+\rho^*_2(b))x.
		\end{equation*}
		A similar calculation show that
		\begin{equation*}
			2\na_ax=(\rho_2(a)-\rho^*_2(a))x-(\rho_1(y)+\rho^*_1(x))a
		\end{equation*}
		and we get	the desired result.
	\end{proof}	
	\begin{Le}\label{diff}
		The differential of the fundamental form $\omega$ associated to the twisted cartesian product  $(\G_{(\rho_1,\rho_2)},\J,\K)$ is given by:
		\begin{align*}
			\md\omega(x,y,c)&=-\om_1\big((\rho_2^*(c)+\rho_2(c))x,y \big),\\
			\md\omega(x,b,c)&=-\om_2\big((\rho_1^*(x)+\rho_1(x)\big) b,c),\\
			\md\omega(x,y,z)&=\md\omega_1(x,y,z) \esp \md\omega(a,b,c)=\md\omega_2(a,b,c),
		\end{align*}
		for any $x,y,z\in \G_1$ and $a,b,c\in \G_2$.
	\end{Le}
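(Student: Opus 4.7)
The plan is to apply the Chevalley–Eilenberg formula for the exterior derivative of an invariant $2$-form on a Lie algebra,
\[
    \md\omega(X,Y,Z) = -\omega([X,Y],Z) + \omega([X,Z],Y) - \omega([Y,Z],X),
\]
to each choice of arguments, using the explicit brackets of $\G_{(\rho_1,\rho_2)}$ together with the block structure $\K = \mk_1 \oplus \mk_2$ (so that $\omega(\G_1,\G_2)=0$). For the pure cases with $x,y,z\in\G_1$ or $a,b,c\in\G_2$, every bracket stays in the same subalgebra and $\omega$ restricts to $\om_1$ or $\om_2$, so the formula immediately reduces to $\md\om_1(x,y,z)$ or $\md\om_2(a,b,c)$.

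For the mixed case $\md\omega(x,y,c)$ with $x,y\in\G_1$ and $c\in\G_2$, the term $\omega([x,y],c)$ vanishes since $[x,y]\in\G_1$, and in $[x,c]=\rho_1(x)c-\rho_2(c)x$ the $\G_2$-part $\rho_1(x)c$ pairs trivially with $y\in\G_1$; the same occurs for $[y,c]$. Collecting what remains yields
\[
    \md\omega(x,y,c) = -\om_1(\rho_2(c)x,y) + \om_1(\rho_2(c)y,x).
\]
The symmetric case $\md\omega(x,b,c)$ is treated by an identical argument after exchanging the roles of $(\G_1,\mk_1,\J_1,\rho_1)$ and $(\G_2,\mk_2,\J_2,\rho_2)$.

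The decisive step is rewriting the second term as an adjoint acting on the first slot. Using the standing hypothesis that $\rho_2(c)$ commutes with $\J_1$, together with the skew-symmetry of $\J_1$ with respect to $\mk_1$ (which follows from $\J_1^2=-\mathrm{id}$ and $\mk_1(\J_1\cdot,\J_1\cdot)=\mk_1(\cdot,\cdot)$), one computes
\[
    \om_1(\rho_2(c)y,x) = \mk_1(\J_1\rho_2(c) y,x) = \mk_1(\rho_2(c)\J_1 y,x) = \mk_1(\J_1 y,\rho_2^*(c)x) = -\om_1(\rho_2^*(c)x,y),
\]
and substitution gives the stated $\md\omega(x,y,c) = -\om_1((\rho_2(c)+\rho_2^*(c))x,y)$. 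The remaining formula for $\md\omega(x,b,c)$ follows by the same manipulation applied to $\rho_1(x)$ and $\J_2$ on $\G_2$.

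The proof is essentially bookkeeping once the Chevalley–Eilenberg formula and the vanishing of $\omega$ on cross-terms are in place; the only subtle point is the commutation-and-adjoint manipulation in the last display, where forgetting the skew-symmetry of $\J_1$ with respect to $\mk_1$, or the standing hypothesis $[\rho_2(c),\J_1]=0$, would prevent the emergence of $\rho_2^*(c)$.
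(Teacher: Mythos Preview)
Your proof is correct and follows essentially the same route as the paper: both apply the Chevalley--Eilenberg formula, drop the cross terms via $\omega(\G_1,\G_2)=0$, and then rewrite $\om_1(\rho_2(c)y,x)$ as $-\om_1(\rho_2^*(c)x,y)$ using the commutation $[\rho_2(c),\J_1]=0$ together with the skew-symmetry of $\J_1$ with respect to $\mk_1$. Your presentation is slightly more compact, but the argument is identical in substance.
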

	\begin{proof}
		Using the exterior derivative of the 2-form $\omega$, we have
		\begin{align*}
			\md\omega(x,y,c)&=-\omega([x,y],c)+\omega([x,c],y)-\omega([y,c],x)\\
			&=-\omega([x,y]_1,c)+\omega(\rho_1(x)c-\rho_2(c)x,y)-\omega(\rho_1(y)c-\rho_2(c)y,x)\\
			&=\omega_1(\rho_2(c)y,x)-\omega_1(\rho_2(c)x,y)\\
			&=\mk_1\big((\J_1\circ\rho_2(c))y,x \big)-\mk_1\big((\J_1\circ\rho_2(c))x,y \big)\\
			&=\mk_1\big((\rho_2(c)\circ\J_1)y,x \big)-\mk_1\big((\J_1\circ\rho_2(c))x,y \big)\\
			&=\mk_1\big(J_1y,\rho_2^*(c)x \big)-\mk_1\big((\J_1\circ\rho_2(c))x,y \big)\\
			&=\mk_1\big(\rho_2^*(c)x,J_1y \big)-\mk_1\big((\J_1\circ\rho_2(c))x,y \big)\\
			&=-\mk_1\big((J_1\circ\rho_2^*(c))x,y \big)-\mk_1\big((\J_1\circ\rho_2(c))x,y \big)\\
			&=-\mk_1\Big(\J_1\circ\big(\rho_2^*(c)+\rho_2(c)\big) x,y \Big)\\
			\md\omega(x,y,c)&=-\om_1\big((\rho_2^*(c)+\rho_2(c))x,y \big).
		\end{align*}
		A similar calculation show that $\md\omega(x,b,c)=-\om_2\big((\rho_1^*(x)+\rho_1(x)\big) b,c).$		
	\end{proof}
	\begin{co}
		The Lie algebra  $(\G_{(\rho_1,\rho_2)},\J,\K)$ is 	\textbf{K\"ahlerian} if and only if  $(\G_i,\J_i,\mk_i)_{i=1.2}$  is  K\"ahlerian and  $\rho_i=-\rho_i^*$ for $i\in\{1,2\}$.
	\end{co}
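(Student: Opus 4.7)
The plan is to read the Kählerian condition $\md\omega = 0$ directly off the four formulas provided by Lemma \ref{diff}. Since $\G_{(\rho_1,\rho_2)} = \G_1\oplus\G_2$ as a vector space, by trilinearity $\md\omega$ vanishes on all triples if and only if it vanishes on each of the four ``pure type'' triples $(x,y,z)$, $(a,b,c)$, $(x,y,c)$, $(x,b,c)$ enumerated in the lemma.

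The two homogeneous pieces $\md\omega(x,y,z) = \md\omega_1(x,y,z)$ and $\md\omega(a,b,c) = \md\omega_2(a,b,c)$ give, respectively, that $(\G_1,\J_1,\mk_1)$ and $(\G_2,\J_2,\mk_2)$ are Kählerian. For the mixed pieces, the identity
\[
\md\omega(x,y,c) = -\om_1\bigl((\rho_2^*(c)+\rho_2(c))x,y\bigr)
\]
must vanish for every $x,y\in\G_1$ and $c\in\G_2$. Because $\om_1 = \mk_1(\J_1\cdot,\cdot)$ is non-degenerate (equivalently, $\mk_1$ is non-degenerate and $\J_1$ is invertible), the vanishing of $\om_1(Ax,y)$ for all $x,y$ forces the endomorphism $A = \rho_2^*(c)+\rho_2(c)$ of $\G_1$ to be zero. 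Letting $c$ range over $\G_2$ yields $\rho_2 = -\rho_2^*$. The symmetric argument applied to $\md\omega(x,b,c) = -\om_2\bigl((\rho_1^*(x)+\rho_1(x))b,c\bigr)$ yields $\rho_1 = -\rho_1^*$.

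The converse implication is immediate: under the four stated conditions, each of the four formulas in Lemma \ref{diff} vanishes identically, whence $\md\omega = 0$. There is no genuine obstacle here; the only point worth stating carefully is the passage from ``$\om_1(A\,\cdot\,,\cdot) \equiv 0$'' to ``$A = 0$'', which is just non-degeneracy of the fundamental form. The whole argument is thus a short unpacking of Lemma \ref{diff}.
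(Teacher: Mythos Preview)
Your proof is correct and follows essentially the same approach as the paper's: both arguments reduce the K\"ahlerian condition to the four ``pure type'' formulas of Lemma~\ref{diff} and then invoke non-degeneracy of $\om_1,\om_2$ (equivalently of $\mk_1,\mk_2$ together with invertibility of $\J_1,\J_2$) to extract $\rho_i=-\rho_i^*$. The only cosmetic difference is that the paper re-derives the mixed formula in the equivalent form $\md\omega(x,y,c)=\mk_1\big(((\rho_2^*(c)+\rho_2(c))\circ\J_1)y,x\big)$ before invoking non-degeneracy, whereas you quote Lemma~\ref{diff} directly; your version is slightly more streamlined.
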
	
	\begin{proof}
		By Lemma \ref{diff} we have, $\md\omega(x,y,z)=\md\omega_1(x,y,z)$ and $\md\omega(a,b,c)=\md\omega_2(a,b,c)$ and since $\J_1$ commute with $\rho_2(c)$ we get :
		\begin{align*}
			\md\omega(x,y,c)&=-\omega_1(\rho_2(c)x,y)+\omega_1(\rho_2(c)y,x)\\
			&=\om_1(y,\rho_2(c)x)+\omega_1(\rho_2(c)y,x)\\
			&=\mk_1(\J_1y,\rho_2(c)x)+\mk_1(\J_1\circ \rho_2(c)y,x)\\
			&=\mk_1(\rho_2^*(c)\circ \J_1y,x)+\mk_1(\J_1\circ \rho_2(c)y,x)\\
			&=\mk_1 \Big(\big(\rho_2^*(c)\circ \J_1+\J_1\circ \rho_2(c)\big)y,x \Big)\\
			&=\mk_1 \Big(\big(\rho_2^*(c)\circ \J_1+ \rho_2(c)\circ\J_1\big)y,x \Big)\\
			\md\omega(x,y,c)&=\mk_1 \Big(\big((\rho_2^*(c)+ \rho_2(c))\circ\J_1\big)y,x \Big)
		\end{align*}
		and similarly since $\J_2$ commute with $\rho_1(x)$ we get
		\begin{align*}
			\md\omega(x,b,c)&=\mk_2 \Big(\big((\rho_1^*(x)+ \rho_1(x))\circ\J_2\big)c,b \Big).
		\end{align*}
		If we suppose that $(\G_{(\rho_1,\rho_2)},\J,\K)$ is K\"ahlerian then $\md\om_1=0=\md\om_2$, 	which means that $\om_1$ and $\om_2$ are both K\"ahlerian. In addition to that 
		\begin{align*}
			\md\omega(x,y,c)&=\mk_1 \Big(\big((\rho_2^*(c)+ \rho_2(c))\circ\J_1\big)y,x \Big)=0,\\
			\md\omega(x,b,c)&=\mk_2 \Big(\big((\rho_1^*(x)+ \rho_1(x))\circ\J_2\big)c,b \Big)=0,
		\end{align*}
		by  non-degeneracy of $\mk_1$ and $\mk_2$ and the fact that $\J_1^2=-Id_{\G_1}$ and $\J_2^2=-Id_{\G_2}$ thus	
		\[	\rho_1=-\rho_1^*,\esp
		\rho_2=-\rho_2^*.\]
		For the inverse it can be check easily.			
	\end{proof}
	
	We note that $\X_{\rho_1}$ (resp. $\X_{\rho_2}$) is the character of the representation  $\rho_1$ (resp. $\rho_2$) defined by $\X_{\rho_1}(x)=tr_2(\rho_1(x))$ and $\X_{\rho_2}(a)=tr_1(\rho_2(a))$. Our main result is the following theorem:
	\begin{theo}\label{theo1}
		Let $(\G_{(\rho_1,\rho_2)},\J,\K)$ be a Hermitian twisted product.
		Then, its associated Lee form $\theta$   is  given  by
		\begin{align}
			\theta(x)	&= \theta_1(x)-\X_{\rho_1}(x),\qquad x\in\G_1\\
			\theta(a)	&=\theta_2(a)-\X_{\rho_2}(a),\qquad a\in\G_2.
		\end{align}
		Moreover, the  Hermitian Lie algebra  $(\G_{(\rho_1,\rho_2)},\J,\K)$ is 
		\begin{itemize}
			\item[i.] \textbf{Balanced} if and only if   $\theta_1(x)=\X_{\rho_1}(x)$ and  $\theta_2(a)=\X_{\rho_2}(a)$ for all $x\in \G_1$ and $a\in \G_2$.
			\item [ii.] \textbf{LCB} if and only if $\G_1$ and $\G_2$ are both LCB and 
			\begin{align*}
				\theta_1\big(\rho_2(a)x\big)-\theta_2\big(\rho_1(x)a\big)=\X_{\rho_1}(\rho_2(a)x\big)-\X_{\rho_2}(\rho_1(x)a\big)
			\end{align*}
			for all $x\in \G_1$ and $a\in \G_2$.\end{itemize}\end{theo}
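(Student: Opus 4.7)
The plan is to exploit the identity
\[
\theta(X)=\sum_{k=1}^{n}\md\om(u_k,\J u_k,X),
\]
valid in any orthonormal $\J$-adapted basis $\{u_k,\J u_k\}_{k=1}^{n}$ of a Hermitian Lie algebra of complex dimension $n$. This identity is the content of $\theta=\Lambda\md\om$, obtained from the Lefschetz decomposition of the 3-form $\md\om$ applied to the definition $\theta=\J\md^*\om$. Once it is in place, the formulas of the theorem drop out of Lemma \ref{diff} by splitting the basis into a $\G_1$-part and a $\G_2$-part.

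Concretely, I would pick adapted orthonormal bases $\{e_i,\J_1 e_i\}_{i=1}^{m}$ of $\G_1$ and $\{f_j,\J_2 f_j\}_{j=1}^{p}$ of $\G_2$ (so $n=m+p$) and split the sum above. For $x\in\G_1$, the terms indexed by $i$ involve only vectors of $\G_1$, so Lemma \ref{diff} reduces each one to $\md\om_1(e_i,\J_1 e_i,x)$, and summing yields $\theta_1(x)$. The terms indexed by $j$ involve $f_j,\J_2 f_j\in\G_2$ and $x\in\G_1$; a cyclic rearrangement followed by Lemma \ref{diff} rewrites each as $-\K_2\big((\rho_1(x)+\rho_1^{*}(x))f_j,f_j\big)$. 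The standing hypothesis that $\rho_1(x)$ commutes with $\J_2$ forces $\rho_1^{*}(x)$ to commute with $\J_2$ as well, so $T:=\rho_1(x)+\rho_1^{*}(x)$ is $\J_2$-equivariant; pairing $f_j$ with $\J_2 f_j$ then gives
\[
\sum_{j=1}^{p}\K_2(Tf_j,f_j)=\tfrac12\tr_{\G_2}T=\tr\rho_1(x)=\X_{\rho_1}(x).
\]
This produces $\theta(x)=\theta_1(x)-\X_{\rho_1}(x)$; the formula for $a\in\G_2$ is obtained by interchanging the roles of the two factors.

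The balanced statement (i) is then immediate: $\theta\equiv 0$ iff its restrictions to $\G_1$ and $\G_2$ both vanish. For the LCB statement (ii) I would apply $\md\theta(X,Y)=-\theta([X,Y])$ together with the observation that each character is automatically closed, since $\md\X_{\rho_i}(X,Y)=-\tr\rho_i([X,Y])=-\tr[\rho_i(X),\rho_i(Y)]=0$. Consequently $\md\theta$ restricted to $\G_1\wedge\G_1$ coincides with $\md\theta_1$ and on $\G_2\wedge\G_2$ with $\md\theta_2$, forcing $\G_1$ and $\G_2$ to be individually LCB. The only remaining condition comes from the mixed piece $\md\theta(x,a)=-\theta(\rho_1(x)a-\rho_2(a)x)$, which after substituting the two formulas for $\theta$ turns into exactly the displayed compatibility relation.

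The main obstacle I anticipate is justifying the opening expression $\theta(X)=\sum_k\md\om(u_k,\J u_k,X)$ directly from the definition $\theta=\J\md^*\om$; after that the argument is essentially bookkeeping, with the only nontrivial input being the trace identity, which packages the $\K_2$-adjoint relation and the $\J_2$-commutation into the character $\X_{\rho_1}$.
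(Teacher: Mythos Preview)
Your argument is correct and takes a genuinely different route from the paper.  The paper proves the Lee-form formulas by computing $\md^*\om$ through the Levi--Civita connection (Lemma~\ref{le cnx}): it expands $(\nabla_{e_i}\om)(e_i,X)$ via Koszul, arrives at the intermediate identity
\[
\md^*\om(X)=-\tr(\ad_{\J X})+\tfrac12\sum_i\K([\J e_i,e_i],X),
\]
and then evaluates the two traces separately on the $\G_1$- and $\G_2$-bases.  You bypass the connection entirely, starting instead from $\theta=\Lambda\md\om$, i.e.\ $\theta(X)=\sum_k\md\om(u_k,\J u_k,X)$, and feed in the already-available expression for $\md\om$ from Lemma~\ref{diff}; the trace of $\rho_1(x)$ then falls out of the $\J_2$-equivariance of $\rho_1(x)+\rho_1^*(x)$.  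Your route is shorter and more conceptual, and it makes Lemma~\ref{le cnx} unnecessary for this theorem.  Regarding your anticipated obstacle: the identity $\theta(X)=\sum_k\md\om(u_k,\J u_k,X)$ is indeed the only point needing care; it follows in a few lines from $\md\om(u_k,\J u_k,X)=-\om([u_k,\J u_k],X)-\K([X,u_k],u_k)-\K([X,\J u_k],\J u_k)$ and the observation that the last two terms sum to $-\tr\ad_X$, which matches the paper's intermediate formula above.

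For part (ii) both arguments are essentially the same computation of $\md\theta$, but you are in fact more careful than the paper: you explicitly note that each character $\X_{\rho_i}$ is closed on $\G_i$ because $\tr\rho_i([x,y]_i)=\tr[\rho_i(x),\rho_i(y)]=0$, which is precisely what is needed to conclude $\md\theta|_{\G_i\wedge\G_i}=\md\theta_i$.
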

	\begin{proof}
		\begin{itemize}
			\item[i.] Let $\B_1=\{e_1,\dots,e_{2n_1}\}$ and $\B_2=\{f_1,\dots,f_{2n_2}\}$ an orthonormal basis of $\G_1$ and $\G_2$, respectively. 
			Recall the definition of $\md^*\om$ :
			\begin{align*}
				\md^*\om(X)=-\sum_{i=1}^{2n_1}(\nabla_{e_i}\om)(e_i,X)-\sum_{j=1}^{2n_2}(\nabla_{f_j}\om)(f_j,X)
			\end{align*}
			where $X\in \G_{(\rho_1,\rho_2)}$ and $\nabla$ is the Levi-Civita connection associated to $(\G_{(\rho_1,\rho_2)},\K)$. Taking into consideration the Koszul formula for the Levi-Civita connection, we get
			\begin{align*}
				(\nabla_{e_i}\om)(e_i,X)
				&=-\om(\na_{e_i}e_i,X)-\om(e_i,\na_{e_i}X)\\
				&=\K(\na_{e_i}e_i,\J X)-\K(\J e_i,\na_{e_i}X)\\
				&=\frac{1}{2} \Big(2\K([\J X,e_i],e_i)+\K([X,e_i],\J e_i)+\K([X,\J e_i],e_i)\\
				&\hspace*{3mm}-\K([\J e_i,e_i],X) \Big)
			\end{align*}
			Therefore
			\begin{align*} -\sum_{i=1}^{2n_1}(\nabla_{e_i}\om)(e_i,X)&=-\tr_1(\ad_{\J X})-\frac12 \Big(-\tr_1(\J \ad_X)+\tr_1(\ad_X J)\\
				&\hspace*{3mm}- \sum_{i=1}^{2n_1}\K([\J_1e_i,e_i]_1,X) \Big)\\
				&=-\tr_1(\ad_{\J X})+\frac12 \sum_{i=1}^{2n_1}\K([\J_1e_i,e_i]_1,X).
			\end{align*}
			Similarly, we get
			\begin{align*}
				-\sum_{j=1}^{2n_2}(\nabla_{f_j}\om)(f_j,X)=-\tr_2(\ad_{\J X})+\frac12 \sum_{j=1}^{2n_2}\K([\J_2f_j,f_j]_2,X).
			\end{align*}
			So
			\begin{align*}
				\md^*\om(X)=-\tr(\ad_{\J X})+\frac12 \Big(\sum_{i=1}^{2n_1}\K([\J_1e_i,e_i]_1,X)+\sum_{j=1}^{2n_2}\K([\J_2f_j,f_j]_2,X)\Big).
			\end{align*}
			We have for $X=e_k,\, k=1 \ldots 2n_1$
			\begin{align*}
				\md^*\om(e_k)=&-\tr\Big(\ad_{\J_1 e_k}\Big)+\frac12\sum_{i=1}^{2n_1}\mk_1\Big([\J_1e_i,e_i]_1,e_k\Big)\\
				&=-\sum_{i=1}^{2n_1}e^i\Big([\J_1e_k,e_i]_1\Big)-\sum_{i=1}^{2n_2}f^i\Big([\J_1e_k,f_i]\Big)\\
				&\hspace*{3mm}+\frac12\sum_{i=1}^{2n_1}\mk_1\Big([\J_1e_i,e_i]_1,e_k\Big)\\
				&= \md^*\om_1(e_k)-\sum_{i=1}^{2n_2}f^i\Big(\rho_1(\J_1e_k)f_i\Big)+\sum_{i=1}^{2n_2}f^i\Big(\rho_2(f_i)\J_1e_k\Big)\\
				&= -\theta_1\circ\J_1^{-1}(e_k)-\tr_2\Big(\rho_1(\J_1e_k)\Big).
			\end{align*}
			So
			\begin{align*}
				\md^*\om(\J_1 e_k)&=-\theta_1(e_k)-\tr_2\big(\rho_1(-e_k)\big)\\
				&=-\theta_1(e_k)+\tr_2\Big(\rho_1(e_k)\Big)\\
				&=-\theta_1(e_k)+\X_{\rho_1}(e_k).
			\end{align*} 
			Since $\theta=-\md^*\om \circ \J$, we get
			\begin{align*}
				\theta(e_k)&=-\md^*\om \circ \J(e_k)\\
				&=-\md^*\om(\J_1 e_k)\\
				\theta(e_k)&=\theta_1(e_k)-\X_{\rho_1}(e_k).
			\end{align*}
			The same is true for the second identity $\theta(f_r)=\theta_2(f_r)-\X_{\rho_2}(f_r)$ that we get  for $X=f_r,$ 
			
			$ r=1,\ldots 2n_2$.	Taking into consideration the balanced condition, we get the result.
			\item[ii.] By the assertion $(i)$, we know that 
			\begin{align*}
				\theta(x)	&= \theta_1(x)-\X_{\rho_1}(x),\\
				\theta(a)	&=\theta_2(a)-\X_{\rho_2}(a).
			\end{align*} 
			for all $x\in \G_1$ and $a\in\G_2$.	So the differential of $\theta$ is defined by
			\begin{align*}
				\md\theta(x,y)&=\md\theta_1(x,y),\quad \forall x,y \in \G_1\\
				\md\theta(a,b)&=\md\theta_2(a,b),\quad \forall a,b \in \G_2\\
				\md\theta(x,a)&=\theta_1\big(\rho_2(a)x\big)-\theta_2\big(\rho_1(x)a\big)-\Big(\X_{\rho_1}(\rho_2(a)x\big)-\X_{\rho_2}(\rho_1(x)a\big)\Big),
			\end{align*}
			where  $( x,a)\in  \G_1\times \G_2$.\newline
			Then  $\md\theta=0$ if and only if $\md\theta_1=\md\theta_2=0$ and 
			\[\theta_1\big(\rho_2(a)x\big)-\theta_2\big(\rho_1(x)a\big)=\X_{\rho_1}(\rho_2(a)x\big)-\X_{\rho_2}(\rho_1(x)a\big)\]	and the result follows.
	\end{itemize}\end{proof} 	
	\begin{remark}			
		If $(\G_1,\om_1,\J_1)$ and $(\G_2,\om_2,\J_2)$ are balanced then the Hermitian  twisted cartesian product is balanced  if and only if $
		\tr_2\big(\rho_1(x))=0$ and $\tr_1(\rho_2(a))=0$, for all $x\in\G_1$ and $a\in\G_2$.	
	\end{remark}
	A simple case that generates several examples is when $\G_1$ and $\G_2$ are abelians.
	\begin{co}\label{co1}
		Let $(\G_1,\J_1,\mk_1)$ and $(\G_2,\J_2,\mk_2)$ be two abelian Hermitian Lie algebras and $(\G_{(\rho_1,\rho_2)},\J,\K)$  their Hermitian twisted cartesian product.	The associated Lee form $\theta$   is  given  by
		\[\theta(x)=-\X_{\rho_1}(x) \esp \theta(a)=-\X_{\rho_2}(a).\]	
		Moreover, the Lie algebra  $(\G_{(\rho_1,\rho_2)},\om,\J)$ is \textbf{balanced} if and only if $\X_{\rho_1}(x)=\X_{\rho_2}(a)=0$	and {LCB} if and only if $tr_1\big(\rho_2(\rho_1(x)a)\big)=tr_2\big(\rho_1(\rho_2(a)x)\big).$
	\end{co}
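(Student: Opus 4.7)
The plan is to specialize Theorem \ref{theo1} to the abelian case. The key observation is that when $\G_1$ and $\G_2$ are abelian, every invariant form is automatically closed, so the fundamental forms $\om_1,\om_2$ are Kähler. In particular, $\md\om_1 = 0$ and $\md\om_2 = 0$, which forces the Lee forms $\theta_1$ and $\theta_2$ to vanish identically, and hence $\md\theta_1 = \md\theta_2 = 0$ trivially. I would dispose of this observation in one line at the start.

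Next, substituting $\theta_1 \equiv 0$ and $\theta_2 \equiv 0$ into the formulas of Theorem \ref{theo1}, the Lee form on $\G_{(\rho_1,\rho_2)}$ reduces immediately to
\begin{align*}
\theta(x) &= -\X_{\rho_1}(x), \qquad \forall x\in \G_1,\\
\theta(a) &= -\X_{\rho_2}(a), \qquad \forall a\in \G_2.
\end{align*}
The balanced criterion is then the vanishing statement $\theta = 0$, which is equivalent to $\X_{\rho_1} = 0$ and $\X_{\rho_2} = 0$ by the direct sum decomposition of $\G_{(\rho_1,\rho_2)}$.

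For the LCB condition, I would apply part (ii) of Theorem \ref{theo1}. Since $\G_1$ and $\G_2$ are both LCB (indeed Kähler), the only surviving constraint is the mixed relation
\[
\theta_1\big(\rho_2(a)x\big) - \theta_2\big(\rho_1(x)a\big) = \X_{\rho_1}\big(\rho_2(a)x\big) - \X_{\rho_2}\big(\rho_1(x)a\big).
\]
The left-hand side vanishes because $\theta_1 = \theta_2 = 0$, and unfolding the definition $\X_{\rho_1}(y) = \tr_2(\rho_1(y))$ and $\X_{\rho_2}(b) = \tr_1(\rho_2(b))$ rewrites the remaining identity as $\tr_2\big(\rho_1(\rho_2(a)x)\big) = \tr_1\big(\rho_2(\rho_1(x)a)\big)$, which is the stated criterion. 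There is no real obstacle here: the whole argument is a direct specialization, and the only thing worth noting is the opening remark that abelian $\G_i$ makes $\theta_i$ vanish.
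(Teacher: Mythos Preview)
Your proof is correct and follows exactly the approach implicit in the paper: the corollary is stated immediately after Theorem~\ref{theo1} without a separate proof, precisely because it is the direct specialization you describe. Your observation that abelian $\G_i$ forces $\theta_i=0$ (and hence $\md\theta_i=0$) is the only ingredient needed, and the rest is substitution.
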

	\begin{exem} 
		A general result of the corollary above is the  balanced Hermitian twisted cartesian product  $\left(\R^{2p}\Join \R^{2q},\J,\K\right)$ 
		such that \newline $\left(\R^{2p}=span\{e_1,\dots,e_{2p}\},\J_1,\mk_1\right)$ and $\left(\R^{2q}=span\{f_1,\dots,f_{2q}\},\J_2,\mk_2\right)$ are the two abelian Hermitian Lie algebras  associated respectively to the representations \newline  $\rho_1:\R^{2p}\rightarrow \text{End}(\R^{2q})$  and  $\rho_2:\R^{2q}\rightarrow \text{End}(\R^{2p})$ defined by:
		\newline $\rho_1(e_i)=\left (
		\begin{array}{c|c}
			A_i & B_i\\
			\hline
			-B_i & A_i\\ 
		\end{array}
		\right)$ and  $\rho_2(f_j)=\left (
		\begin{array}{c|c}
			C_j & D_j\\
			\hline
			-D_j & C_j\\ 
		\end{array}
		\right)$\newline  where $(A_i,B_i) \in \text{Diag}(a_1,\dots,a_q)\times \text{Diag}(b_1,\dots,b_q)$ and\newline  $(C_j,D_j) \in \text{Diag}(c_1,\dots,c_p)\times \text{Diag}(d_1,\dots,d_p)$ and $\tr\, A_i=0=\tr\, C_j $ for  the standard complex structures  $\J_1=\begin{pmatrix}
			0&-I_p\\
			I_p&0\\
		\end{pmatrix}$  and $\J_2=\begin{pmatrix}
			0&-I_q\\
			I_q&0\\
		\end{pmatrix}$.
	\end{exem}
	\section{Applications in dimension six}
	In this section we look for six-dimensional balanced  Hermitian twisted cartesian products. In order to do that only two cases are presented, the first one is $\R^2 \Join \G$ and the second one is $\mathrm{aff(\R)}\Join\G$, where $\dim(\G)=4$.

	\subsection{$\mathbf{\mathbb{R}^2\Join\G}$  with  $\dim(\G)=4$}
	Let  $\R^2=span \{e_1,e_2\}$ with $[e_1,e_2]=0$, $\om_1=e^{12}$ and  $\J_1(e_1)=e_2$ and $(\mathfrak{g},\om_2,\J_2)$ be a four-dimensional Hermitian Lie algebra and its associated Lee form $\theta_2$, 
	in order to applicate our Theorem \ref{theo1} $\mathfrak{g}$ is necessary Lck (because $\md\theta_2=\md \,\tr(\rho_2)=0$ and $\md \om_2=\theta_2 \wedge \om_2$). In the other hand, the article \cite{DA} gives a classification of Lck structures on four dimensional Lie algebras up to linear equivalence. 
	In fact basing on results of Table 2 see \cite{DA}, we twist $\mathbb{R}^2$ with each one of the four-dilensional  Lie algebras listed in this table. We obtain the following theorem. 
	\begin{theo}\label{Th3.1}
		Balanced Hermitian twisted cartesian products Lie algebras of type $(\R^2\Join\G,\omega,\J)$ where $\dim(\mathfrak{g)}=4$ are described as follows:		
		\begin{itemize}
			\item $\R^2 \rtimes \mathfrak{rr}_{3,1}:$
			\begin{itemize}\item []
			$(-13-x23-t26,x13+t16-23,0,-34,-35,0)$
				\item[]$\J(e_1)=e_2$,  $\J(e_3)=e_6$, $\J(e_4)=e_5$.
				\item[]$\om=e^{12}+\sigma e^{36}+e^{45}$ with $\sigma>0$.
			\end{itemize}
			\item $\R^2 \rtimes \mathfrak{rr}_{3,0}:$
			\begin{itemize} 
				\item[] {\small $\left(\frac{\delta}{2}13+\frac{\sigma}{2\delta}15-x23-z25-t26,x13+z15+t16+\frac{\delta}{2}23+\frac{\sigma}{2\delta}25,0,-34,0,0\right)$}
			
				\item[]$\J(e_1)=e_2$,  $\J(e_3)=e_4$, $\J(e_5)=e_6$.
				\item[]$\om=e^{12}+\frac{\delta(\delta+1)}{\sigma} e^{34}+e^{36}-e^{45}+\frac{\sigma}{\delta^2}e^{56}$ with  $\delta>0$, $\sigma>0$.
			\end{itemize}
			\item $\R^2 \rtimes \mathfrak{rh}_{3}:$ 
				\begin{itemize}\item []
			$(-\frac12 16-x23-y24-t26,x13+y14+t16-\frac12 26,0,0,-34,0)$
				\item [] $\J(e_1)=e_2,\, \J(e_3)=e_4,\, \J(e_5)=e_6$.
				\item [] $\om=e^{12}+\sigma \left(e^{34}+e^{56}\right) $ with $\sigma>0.$
			\end{itemize}
			\item $\R^2 \rtimes \mathfrak{rr}'_{3,\gamma}$\quad $\gamma>0:$
			\begin{itemize}
			\item[]$(-\gamma13-x23-t26,x13+t16-\gamma23,0,-\gamma34-35,34-\gamma35,0)$
				\item [] $\J(e_1)=e_2,\, \J(e_3)=e_6,\, \J(e_4)=\pm e_5$.
				\item [] $\om=e^{12}+\sigma e^{36}+e^{45} $ with $\sigma>0.$
			\end{itemize}
			\item $\R^2 \rtimes_1 \mathfrak{r}_2\mathfrak{r}_2:$
						\begin{itemize}\item[]
			 $\left(-\frac12 13-x23-z25,x13-\frac12 23+z15,0,-34,0,-56 \right)$
				\item [] $\J(e_1)=e_2,\, \J(e_3)=e_4,\, \J(e_5)=e_6$				
				\item [] $\om=e^{12}+\omega_{34} e^{34}+\omega_{56} (-e^{36}+e^{45}+e^{56})$ with $\om_{34}>\om_{56}>0$.
			\end{itemize}			
			\item $\R^2 \rtimes_2 \mathfrak{r}_2\mathfrak{r}_2:$
			\begin{itemize}\item []
			 $\left(-\frac12 15-x23-z25,x13+z15-\frac12 25,0,-34,0,-56 \right)$
		
				\item [] $\J(e_1)=e_2,\, \J(e_3)=e_4,\, \J(e_5)=e_6$.
				\item []$\om=e^{12}+\om_{34}(e^{34}-e^{36}+e^{45})+\om_{56}e^{56}$ with $\om_{56}>\om_{34}>0.$
			\end{itemize}
			\item $\R^2 \rtimes_3 \mathfrak{r}_2\mathfrak{r}_2:$
				\begin{itemize}\item []
			 $(\frac{\sigma}{2}13+\frac{\tau}{2}15-x23-z25,x13+z15+\frac{\sigma}{2}23+\frac{\tau}{2}25,0,-34,0,-56)$
		
				\item [] $\J(e_1)=e_2,\, \J(e_3)=e_4,\, \J(e_5)=e_6$.
				\item [] $\om=e^{12}+\mu \left(\frac{1+\sigma}{\tau}e^{34}+e^{36}-e^{45}+\frac{\tau+1}{\sigma}e^{56}\right)$ with $\sigma\tau \neq0,\, \sigma+\tau\neq -1,\, \mu \neq 0,\, \frac{\mu(1+\sigma)}{\tau}>0,\, \frac{\mu(1 + \tau)}{\sigma}>0,\,  \frac{\sigma+ \tau+1}{\sigma\tau}>0.$
			\end{itemize}
			\item $\R^2 \rtimes_1 \mathfrak{r}'_2:$
			\begin{itemize}\item []
			 $(-13-x23-y24,x13+y14-23,0,0,-35+46,-36-45)$
			
				\item [] $\J(e_1)=e_2,\, \J(e_3)=e_5$, $\J(e_4)=e_6$.
				\item [] $\om=e^{12}+\om_{34} (e^{34}+e^{56})+\om_{35}(e^{35}+e^{46})$ with $\om_{35}>0,\, \om_{35}^2-\om_{34}^2>0,\, \om_{34}>0$.
			\end{itemize}
			\item $\R^2 \rtimes_2 \mathfrak{r}'_2:$
			\begin{itemize}\item []
			 $\left(\frac{\alpha}{2}13-x23-y24,x13+y14+\frac{\alpha}{2}23,0,0,-35+46,-36-45 \right)$
			
				\item [] $\J(e_1)=e_2,\, \J(e_3)=e_5$, $\J(e_4)=e_6.$
				\item [] $\om=\om_{46}(-(\alpha+1)e^{35}+e^{46})$ with $\om_{46}>0,\, \alpha+1<0.$ 
			\end{itemize}
			\item $\R^2 \rtimes_3 \mathfrak{r}'_2:$
			\begin{itemize}\item []
			 $(-13-x23-y24,x13+y14-23,0,0,-35+46,-36-45)$
			
				\item []$\J(e_1)=e_2,\,\J(e_3)=-ae_3+\frac{a^2 +1}{b}e_4$, $\J(e_5)= e_6$ with $(a,b)\neq (0,1)$.
				\item[] $\om=e^{12}+\om_{34}e^{34}+e^{56}$ with $b\,\om_{34}>0.$
			\end{itemize}
			\item $\R^2 \ltimes_4 \mathfrak{r}'_2:$
			\begin{itemize}\item []
			 $(-13-24,14-23,0,0,-a(13+24)+b(23-14)-35+46,a(23-14)+b(13+24)-36-45)$
			
				\item[] $\J(e_1)=e_2,\,\J(e_3)=e_4$, $\J(e_5)= e_6$  .
				\item[] $\om=e^{12}+\sigma e^{34}+\om_{56}e^{56}$ with $\sigma>0.$
			\end{itemize}
			\item $\R^2 \rtimes_5 \mathfrak{r}'_2:$
			\begin{itemize}\item []
			 $(-13-x23-y24,x13+y14-23,0,0,-35+46,-36-45)$
			
				\item[] $\J(e_1)=e_2,\,\J(e_3)=e_4$, $\J(e_5)= e_6$.
				\item[] $\om=e^{12}+\sigma e^{34}+\om_{56}e^{56}$ with $\sigma>0.$
			\end{itemize}
			\item $\R^2 \ltimes_6 \mathfrak{r}'_2:$
			\begin{itemize}\item []
			 $(-13+24,-14-23,0,0,a(24-13)-b(14+23)-35+46,-a(14+23)+b(13-24)-36-45)$
			
				\item[] $\J(e_1)=e_2,\,\J(e_3)=e_4$, $\J(e_5)= e_6.$
				\item[] $\om=e^{12}+\sigma e^{34}+\om_{56}e^{56}$ with $\sigma>0.$
			\end{itemize}
			\item $\R^2 \rtimes_7 \mathfrak{r}'_2:$
			\begin{itemize}\item []
			 $(\frac{\alpha}{2}13+\frac{\beta}{2}14-x23-y24,x13+y14+\frac{\alpha}{2}23+\frac{\beta}{2}24,0,0,-35+46,-36-45)$
			
				\item [] $\J(e_1)=e_2,\, \J(e_3)=e_5,\, \J(e_4)=e_6.$
				\item[] $\omega=e^{12}+\om_{35}\left(e^{35}+\frac{\beta}{\alpha}(e^{36}+e^{45})+\frac{\beta^2-\alpha}{\alpha(\alpha+1)}e^{46}\right)$ with $\om_{35}>0,\newline \alpha\neq -1,\, \alpha\neq0, \beta>0,\, \frac{\beta^2-\alpha}{\alpha(\alpha+1)}>\frac{\beta^2}{\alpha^2}.$
			\end{itemize}
			\item $\mathbb{R}^2 \rtimes \mathfrak{r}_{4,\alpha,1}$ ,\; $ \alpha\notin \left\{0;1\right\}:$
			\begin{itemize} \item []
			 $(-16-t25,t16-26,36,\alpha46,56,0)$
			
				\item[]  $\J(e_1)=e_2,\, \J(e_3)=e_5$, $\J(e_4)=-e_6.$
				\item[] $\om=e^{12}+e^{35}+\sigma e^{46}$ with $\sigma<0.$ 
			\end{itemize}
			\item $\mathbb{R}^2 \rtimes \mathfrak{r}_{4,\alpha,\alpha}$ ,\; $ \alpha\notin \left\{0;1\right\}:$
			\begin{itemize} \item []
			 $(-\alpha16-t26,t16-\alpha26,36,\alpha46,\alpha56,0)$
			
				\item[] $\J(e_1)=e_2,\, \J(e_3)=-e_6$, $\J(e_5)=-e_4.$
				\item[] $\om=e^{12}+\sigma e^{36} + e^{45}$ with $\sigma<0.$
			\end{itemize}
			\item $\mathbb{R}^2 \rtimes \mathfrak{r}'_{4,\gamma,\delta}$ ,\; $\delta>0,\, \gamma \neq0:$ 
			\begin{itemize} \item []
			 $(-\gamma16-t26,t16-\gamma26,36,\gamma46+\delta56,-\delta46+\gamma56,0)$
			
				\item[] $\J(e_1)=e_2,\, \J(e_3)=-e_6,\, \J(e_5)=\pm e_4$
				\item[] $\om=e^{12}+\sigma e^{36}\pm e^{45}$ with $\sigma<0.$
			\end{itemize}
			\item $\R^2 \rtimes \mathfrak{d}_4:$ 
			\begin{itemize} \item []
			$\left(-\frac12 16-t26,t16-\frac12 26,36,-46,-34,0\right)$
			
				\item[] $\J(e_1)=e_2,\, \J(e_3)=-e_5,\, \J(e_4)=-e_6.$
				\item[] $\om=e^{12}- e^{35}+ \sigma e^{46}$ with $\sigma<0.$
			\end{itemize}
			\item $\R^2 \rtimes_1 \mathfrak{d}_{4,1}:$
			\begin{itemize} \item []
			 $\left(-\frac12 16-y24-t26,y14+t16-\frac12 26,36,0,-34+56,0\right)$
			
				\item[] $\J(e_1)=e_2,\, \J(e_3)=e_6,\, \J(e_4)=e_5.$
				\item[] $\om=e^{12}+\sigma e^{36}+  e^{45}$ with $\sigma>0.$
			\end{itemize}
			\newpage
			\item $\R^2 \rtimes_2 \mathfrak{d}_{4,1}:$ 
			\begin{itemize}\item []
			$\left(\frac12 14+\frac{\alpha}{2}16-y24-t26,y14+t16+\frac12 24+\frac{\alpha}{2}26,36,0,-34+56,0\right)$
			
				\item[] $\J(e_1)=e_2,\, \J(e_3)=e_6,\, \J(e_4)=e_5.$
				\item[] $\om=e^{12}+\left(\frac{\om_{45}(\alpha+1)}{\beta^2}\right)e^{34}+\left(\frac{(\omega_{34}\beta-\om_{45})(\alpha+1)}{\beta^2}\right)e^{36}+\frac{\om_{45}}{\beta^2}e^{45}\newline+\left(-\frac{\om_{45}(\alpha+1)}{\beta^2}\right)e^{56}$ with $\beta\neq0,\, \om_{45}>0,\, (\om_{34}\beta-\om_{45})(\alpha+1)>0, \, (\om_{34}\beta-\om_{45}-\om_{45}(\alpha+1))\om_{45}(\alpha+1)>0.$
			\end{itemize}
			\item $\R^2 \rtimes_1 \mathfrak{d}_{4,\frac12}:$
			\begin{itemize} \item []
			 $\left(\frac{\alpha}{2}16-t26,t16+\frac{\alpha}{2}26,\frac12 36,\frac12 46,-34+56,0\right)$
			
				\item[] $\J(e_1)=e_2,\, \J(e_3)=e_4,\, \J(e_5)=-e_6.$
				\item[] $\om=e^{12}+\tau (e^{34}-(\sigma+1)e^{56})$ with $\tau>0,\, \sigma+1>0.$
			\end{itemize}
			\item $\R^2 \rtimes_2 \mathfrak{d}_{4,\frac12}:$
			\begin{itemize} \item []
			 $\left(-\frac{3}{4}16-t26,t16-\frac{3}{4}26,\frac12 36,\frac12 46,-34+56,0\right).$
			
				\item[] $\J(e_1)=e_2,\, \J(e_3)=-e_4,\, \J(e_5)=-e_6.$
				\item[] $\om=e^{12}+\sigma(e^{34}+\frac12 e^{56})$ with $\sigma<0$.
			\end{itemize}
			\item $\R^2 \rtimes_2 \mathfrak{d}_{4,\frac12}:$
			\begin{itemize} \item []
			 $\left(-\frac{3}{4}16-t26,t16-\frac{3}{4}26,\frac12 36,\frac12 46,-34+56,0\right).$
			
				\item[] $\J(e_1)=e_2,\, \J(e_3)=2e_6,\, \J(e_4)=e_5.$
				\item[] $\om=e^{12}+\om_{22}e^{45}+\frac{\sigma}{2\om_{22}}e^{36}$ with $\om_{22}>0,\, \sigma>0.$
			\end{itemize}
			\item $\R^2 \rtimes_1 \mathfrak{d}'_{4,0}:$
			\begin{itemize} \item []
			 $\left(\frac{\mu}{2}16-t26,t16+\frac{\mu}{2}26,46,-36,-34,0\right)$
			
				\item[] $\J(e_1)=e_2,\, \J(e_3)=e_4,\, \J(e_5)=e_6.$
				\item[] $\om=e^{12}+\sigma(e^{34}-\mu e^{56})$ with $\sigma>0,\, \mu<0.$
			\end{itemize}
			\item $\R^2 \rtimes_2 \mathfrak{d}'_{4,0}:$
			\begin{itemize} \item []
			 $\left(\frac{\mu}{2}16-t26,t16+\frac{\mu}{2}26,46,-36,-34,0\right)$
			
				\item[] $\J(e_1)=e_2,\, \J(e_3)=e_4,\, \J(e_5)=-e_6.$
				\item[] $\om=e^{12}+\sigma(e^{34}-\mu e^{56})$ with $\sigma>0,\, \mu>0.$
			\end{itemize}
			\item $\R^2 \rtimes_1 \mathfrak{d}_{4,\lambda}$ ,\; $\lambda>\frac12, \lambda\neq1:$
			\begin{itemize}\item []
			 $\left(\left(\frac{\lambda}{2}-1\right)16-t26,t16+\left(\frac{\lambda}{2}-1\right)26,\lambda36,(1-\lambda)46,-34+56,0\right)$
			
				\item[] $\J(e_1)=e_2,\, \J(e_3)=-e_5,\, \J(e_4)=-\frac{1}{\lambda-1}e_6.$ 
				\item[] $\om=e^{12}-e^{35}-\sqrt{\sigma+\om_{22}}(\lambda-1)\,e^{36}-\sqrt{\sigma+\om_{22}}\,e^{45}+(1-\lambda)\om_{22}e^{46}$ with $\sigma<0, \,  \om_{22}\geq-\sigma$.
			\end{itemize}
			\newpage
			\item $\R^2 \rtimes_2 \mathfrak{d}_{4,\lambda}$ ,\; $\lambda>\frac12, \lambda\neq1:$
			\begin{itemize}\item []
			 $\left(\left(\frac{\lambda}{2}-1\right)16-t26,t16+\left(\frac{\lambda}{2}-1\right)26,\lambda36,(1-\lambda)46,-34+56,0\right)$
			\item[]  $\J(e_1)=e_2,\, \J(e_3)=-e_5,\, \J(e_4)=-\frac{1}{\lambda-1}e_6.$ 
			\item[] $\om_2=e^{12}-e^{35}+\sqrt{\sigma+\om_{22}}(\lambda-1)\,e^{36}+\sqrt{\sigma+\om_{22}}\,e^{45}+(1-\lambda)\om_{22}e^{46}$ with $\sigma<0,\, \om_{22}\geq-\sigma$.
			\end{itemize}
			\item $\R^2 \rtimes_3 \mathfrak{d}_{4,\lambda}$ ,\; $\lambda>\frac12, \lambda\neq1:$
				\begin{itemize}\item []
			 $\left(-\frac{\lambda+1}{2}16-t26,t16-\frac{\lambda+1}{2}26,\lambda36,(1-\lambda)46,-34+56,0\right)$
		
				\item[]  $\J(e_1)=e_2,\, \J(e_3)=-e_5,\, \J(e_4)=-\frac{1}{\lambda-1}e_6.$ 
				\item[] $\om=e^{12}-\om_{11}e^{35}-(\lambda-1)\om_{22}e^{46}$ with $\om_{11}>0,\, \om_{22}>0.$
			\end{itemize}
			\item $\R^2 \rtimes_4 \mathfrak{d}_{4,\lambda}$ ,\; $\lambda>\frac12, \lambda\neq1:$
			\begin{itemize} \item []
			 $\left(\left(\frac{\lambda}{2}-1\right)16-t26,t16+\left(\frac{\lambda}{2}-1\right)26,\lambda36,(1-\lambda)46,-34+56,0\right)$
			
				\item[] $\J(e_1)=e_2,\, \J(e_3)=\frac{1}{\lambda} e_6,\, \J(e_4)=e_5 $
				\item[] $\om=e^{12}+\om_{11}\lambda e^{36}+\om_{22}e^{45} $ with $\om_{11}>0,\, \om_{22}>0.$
			\end{itemize}
			\item $\R^2 \rtimes_1 \mathfrak{gl}_2:$
			\begin{itemize} \item []
			 $\left(-\frac{\mu}{2}16,-\frac{\mu}{2}26,-45,-2\times 34,2\times 35,0\right)$
			
				\item[] $\J(e_1)=e_2,\, \J(e_3)=-(e_4+e_5),\, \J(e_4)=\frac12 e_3-\frac{1}{\mu}e_6$ 	with $\mu \in \mathbb{R}\setminus \{0\}.$
				\item[] $\om=e^{12}+\om_{34}e^{34}+\om_{35}e^{35}+\om_{45}\mu e^{36}+\om_{45}e^{45}+\frac12 \om_{34}\mu e^{46}-\frac12 \om_{35}\mu e^{56}$ with 
				$\om_{34}\geq \om_{35}>0,\, \om_{45}\geq 0,\, \om_{34}\om_{35}-\om_{45}^2>0.$
			\end{itemize}
			\item $\R^2 \rtimes_2 \mathfrak{gl}_2:$
			\begin{itemize} \item []
			 $\left(\frac{\alpha}{2}16,\frac{\alpha}{2}26,-45,-2\times 34,2\times 35,0\right)$
			
				\item[] $\J(e_1)=e_2,\, \J(e_3)=-(e_4+e_5),\, \J(e_4)=\frac12 e_3-\frac{1}{\mu}e_6$ 	with $\mu \in \mathbb{R}\setminus \{0\}.$
				\item[] $\om=e^{12}+\om_{34}e^{34}+\om_{34}e^{35}-\frac12 \om_{34}\alpha e^{46}+\frac12 \om_{34}\alpha e^{56}$  with $\alpha\neq -\mu,\newline \om_{34}>0,\, \frac{\alpha}{\mu }<0.$
			\end{itemize}
			\item $\R^2 \rtimes_1 \mathfrak{u}_2:$
			\begin{itemize} \item []
			 $\left(\frac{\alpha}{2}16-t26,t16+\frac{\alpha}{2}26,45,-35,34,0\right)$
			
				\item[]  $\J(e_1)=e_2,\, \J(e_3)=ae_3-be_6,\, \J(e_4)=-e_5$ with $a\neq0,\, b\neq0.$
				\item[] $\om=e^{12}+\om_{45}\alpha e^{36}+\om_{45}e^{45}$  with  $\alpha\neq0,\, \om_{45}<0,\, \frac{\alpha}{b}>0.$
			\end{itemize}
			\item $\R^2 \rtimes_2 \mathfrak{u}_2:$
			\begin{itemize} \item []
			 $\left(-\frac{\alpha}{2}16+t26,-t16-\frac{\alpha}{2}26,45,-35,34,0\right)$
			
				\item[] $\J(e_1)=e_2,\, \J(e_3)=-be_6,\, \J(e_4)=-e_5$  with $b\neq0.$
				\item[] $\om=e^{12}+\om_{45}\alpha e^{36}+\om_{45}e^{45}$ with $\alpha\notin \{0,-\frac{1}{b}\},\, \om_{45}<0,\, \frac{\alpha}{b}>0.$
			\end{itemize}
			\item $\R^2 \rtimes_1 \mathfrak{d}'_{4,\delta}:$
				\begin{itemize} \item []
			 $\left(\frac{\mu}{2}16-t26,t16+\frac{\mu}{2}26,\frac{\delta}{2}36+46,-36+\frac{\delta}{2}46,-34+\delta56,0\right)$
		
				\item[] $\J(e_1)=e_2,\, \J(e_3)=-e_4,\, \J(e_5)=-e_6.$
				\item[] $\om=e^{12}+\sigma(e^{34}-(\delta+\mu )e^{56})$ with $\sigma<0,\, \delta+\mu <0,\, \mu \neq0.$
			\end{itemize}
			
			\item $\R^2 \rtimes_2 \mathfrak{d}'_{4,\delta}:$
				\begin{itemize} \item []
			 $\left(\frac{\mu}{2}16-t26,t16+\frac{\mu}{2}26,\frac{\delta}{2}36+46,-36+\frac{\delta}{2}46,-34+\delta56,0\right)$
		
				\item[] $\J(e_1)=e_2,\, \J(e_3)=-e_4,\, \J(e_5)=e_6.$
				\item[] $\om=e^{12}+\sigma(e^{34}-(\delta+\mu )e^{56})$ with $\sigma<0,\, \delta+\mu >0,\, \mu \neq0.$
			\end{itemize}
			\item $\R^2 \rtimes_3 \mathfrak{d}'_{4,\delta}:$
			\begin{itemize} \item []
			 $\left(\frac{\mu}{2}16-t26,t16+\frac{\mu}{2}26,\frac{\delta}{2}36+46,-36+\frac{\delta}{2}46,-34+\delta56,0\right)$
			
				\item[] $\J(e_1)=e_2,\, \J(e_3)=e_4,\, \J(e_5)=e_6.$
				\item[] $\om=e^{12}+\sigma(e^{34}-(\delta+\mu )e^{56})$ with $\sigma>0,\, \delta+\mu <0,\, \mu \neq0.$
			\end{itemize}
			\item $\R^2 \rtimes_4 \mathfrak{d}'_{4,\delta}:$
			\begin{itemize} \item []
			 $\left(\frac{\mu}{2}16-t26,t16+\frac{\mu}{2}26,\frac{\delta}{2}36+46,-36+\frac{\delta}{2}46,-34+\delta56,0\right)$
				\item[] $\J(e_1)=e_2,\, \J(e_3)=e_4,\, \J(e_5)=-e_6.$
				\item[] $\om=e^{12}+\sigma(e^{34}-(\delta+\mu )e^{56})$ with $\sigma>0,\, \delta+\mu >0,\, \mu \neq0.$
			\end{itemize}
		\end{itemize}
	\end{theo}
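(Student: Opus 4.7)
Since $\G_1=\R^2$ is abelian, its Lee form $\theta_1$ vanishes, so by Theorem~\ref{theo1}(i) the balanced conditions reduce to $\X_{\rho_1}(x)=0$ for all $x\in\R^2$ together with $\theta_2(a)=\X_{\rho_2}(a)$ for all $a\in\G$. Because $\X_{\rho_2}\in\G^*$ is a Lie algebra character it is automatically closed, so $\md\theta_2=\md\X_{\rho_2}=0$. Combined with the fact that in real dimension four any Hermitian structure satisfies $\md\om_2=\theta_2\wedge\om_2$, this forces $(\G,\J_2,\mk_2)$ to be LCK, so that $\G$ must appear in the classification of four-dimensional LCK Lie algebras compiled in Table~2 of \cite{DA}. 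The proof then proceeds by running down that list.

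\textbf{Shape of the admissible representations.} Since $\R^2$ is abelian, $\mathrm{Der}(\R^2)=\mathfrak{gl}_2(\R)$, and the standing assumption that $\rho_2(a)$ commutes with $\J_1$ places $\rho_2(a)$ inside the copy of $\mathbb{C}$ in $\mathfrak{gl}_2(\R)$ consisting of matrices of the form $\bigl(\begin{smallmatrix}p&-q\\q&p\end{smallmatrix}\bigr)$. The balance matching $\theta_2(a)=\X_{\rho_2}(a)=2p(a)$ determines $p=\tfrac12\theta_2$ uniquely and leaves $q:\G\to\R$ as a free linear form, necessarily vanishing on $[\G,\G]$ because $\rho_2$ must land in the abelian subalgebra $\mathbb{C}$; this is the origin of the parameters $t,x,y,z$ visible in the structure equations. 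Dually, each $\rho_1(x)$ must lie in $\mathrm{Der}(\G)$, commute with $\J_2$, be trace-free, and satisfy $[\rho_1(e_1),\rho_1(e_2)]=0$; these conditions carve out a subspace of $\mathrm{Der}(\G)$ that can be read off from the explicit $\J_2$ and bracket of $\G$ provided in \cite{DA}. The compatibility identities between $\rho_1$ and $\rho_2$ then couple the two data linearly.

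\textbf{Case-by-case execution and main obstacle.} For each four-dimensional LCK Lie algebra of \cite{DA} one (i) writes the general admissible $(\rho_1,\rho_2)$ by the recipe above, (ii) assembles the twisted bracket on $\R^2\oplus\G$ and verifies Jacobi via the compatibility identities, (iii) checks integrability of $\J=\J_1\oplus\J_2$ using \eqref{com1}--\eqref{com2}, and (iv) writes the general positive $(1,1)$-form $\om=e^{12}+\om_2+\om_{\mathrm{mixed}}$ taming $\J$. The main labour lies in step~(iv): enumerating all positive mixed forms compatible with $\J$, imposing the open inequalities on the coefficients of $\om$ required for positivity, and discarding families equivalent under automorphisms of $\G$ preserving $\J_2$; it is here that the various sign conditions, positivity constraints (such as $\sigma>0$, $\om_{35}^2-\om_{34}^2>0$, $\delta+\mu<0$, etc.) and parameter restrictions in the statement are produced. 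Running through the list of \cite{DA} and normalising scalars then yields exactly the families displayed in the statement.
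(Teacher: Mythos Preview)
Your reduction to the LCK case and your analysis of the admissible $\rho_1,\rho_2$ match the paper's argument essentially verbatim: the paper opens Section~3.1 with the same observation that $\md\theta_2=\md\X_{\rho_2}=0$ forces $(\G,\J_2,\mk_2)$ to be LCK, invokes Table~2 of \cite{DA}, and then carries out only the case $\mathfrak{rr}_{3,1}$ as a template (general derivations commuting with $\J_i$, representation constraints, trace constraints from Theorem~\ref{theo1}(i)), declaring the remaining cases analogous.

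The one point where you diverge from the paper is your step~(iv), and it reflects a misconception. By the definition in Section~\ref{section 2}, a Hermitian twisted cartesian product carries the \emph{direct-sum} metric $\K=\mk_1\oplus\mk_2$ and complex structure $\J=\J_1\oplus\J_2$, so its fundamental form is exactly $\om=e^{12}+\om_2$; there is no $\om_{\mathrm{mixed}}$ to enumerate. The positivity inequalities appearing in the statement ($\sigma>0$, $\om_{35}^2-\om_{34}^2>0$, $\delta+\mu<0$, etc.) are not produced by any new analysis on the six-dimensional product---they are copied verbatim from the LCK classification of $(\G,\J_2,\om_2)$ in~\cite{DA}. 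The actual labour sits entirely in your steps~(i)--(ii): computing $\mathrm{Der}(\G)\cap\{A:[A,\J_2]=0\}$ explicitly for each $\G$, imposing $\tr=0$ on $\rho_1$ and $\tr=\theta_2$ on $\rho_2$, and then checking the compatibility identities (which, for instance in the $\mathfrak{rr}_{3,1}$ case, force $\rho_1=0$ even though the trace-free condition alone leaves a two-parameter family). Apart from this mislocation of the difficulty, your outline is the paper's proof.
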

	
	\begin{proof}
		We will give the proof in the case $\mathbf{\mathbb{R}^2 \Join \mathfrak{rr}_{3,1}}$ since all cases should be handled in a similar way. Let $(\mathfrak{rr}_{3,1}=span\{e_3,e_4,e_5,e_6\},\om_2,\J_2)$ be the  four-dimensional Hermitian Lie algebra  with $[e_3,e_4]=e_4$, $[e_3,e_5]=e_5$, $\om_2=\sigma e^{36}+e^{45}\, (\sigma>0)$ and $\J_2(e_3)=e_6$, $\J_2(e_4)=e_5$ and
		$\theta_2=-2e^3$. 
		Using the definitions above, we know that $\rho_1 : \R^2\lr Der(\mathfrak{rr}_{3,1})\esp \rho_2 : \mathfrak{rr}_{3,1}\lr Der(\R^2).$
		Lets  look now for the derivations $\rho_1(e_1), \, \rho_1(e_2)$ of $\mathfrak{rr}_{3,1}$ which commute with $\J_2$ and $\rho_2(e_3), \, \rho_2(e_4), \, \rho_2(e_5), \, \rho_2(e_6)$ of $\R^2$ which commute with $\J_1$.
		We get
		\begin{center}
			$\rho_2(e_3)=\begin{pmatrix}
				x_2 & -x_1 
				\\
				x_1 & x_2 
			\end{pmatrix}
			$,\; $\rho_2(e_4)=\begin{pmatrix}
				y_2 & -y_1 
				\\
				y_1 & y_2 
			\end{pmatrix}
			$,\; $\rho_2(e_5)=\begin{pmatrix}
				z_2 & -z_1 
				\\
				z_1 & z_2 
			\end{pmatrix}
			$,\; $\rho_2(e_6)=\begin{pmatrix}
				t_2 & -t_1 
				\\
				t_1 & t_2 
			\end{pmatrix}
			$
		\end{center}
		and
		\begin{center}
			$\rho_1(e_1)= \begin{pmatrix}
				0 & 0 & 0 & 0 
				\\
				0 & a_{1} & -a_{2} & 0 
				\\
				0 & a_{2} & a_{1} & 0 
				\\
				0 & 0 & 0 & 0 
			\end{pmatrix}
			$,\;
			$\rho_1(e_2)= \begin{pmatrix}
				0 & 0 & 0 & 0 
				\\
				0 & b_{1} & -b_{2} & 0 
				\\
				0 & b_{2} & b_{1} & 0 
				\\
				0 & 0 & 0 & 0 
			\end{pmatrix}$
		\end{center}
		with $x_1,x_2,y_1,y_2,z_1,z_2,t_1,t_2,a_1,a_2,b_1,b_2\in \mathbb{R}$.\\
		Taking into consideration that $\rho_1$ and $\rho_2$ are representations, we have:
		\begin{center}
			$\rho_2(e_3)=\begin{pmatrix}
				x_2 & -x_1\\
				x_1 & x_2 
			\end{pmatrix}
			$,\; $\rho_2(e_4)=0
			$,\; $\rho_2(e_5)=0
			$,\; $\rho_2(e_6)=\begin{pmatrix}
				t_2 & -t_1 \\
				t_1 & t_2 
			\end{pmatrix}$.
		\end{center}
		Since $\theta_1=0$ and $\theta_2=-2e^3$, we have : $0=\theta_1(e_1)=tr_2(\rho_1(e_1))=2a_1$ and $0=\theta_1(e_2)=tr_2(\rho_1(e_2))=2b_1$ and  $-2=\theta_2(e_3)
		=tr_1(\rho_2(e_3))
		=2x_2$ and $0=\theta_2(e_6)
		=tr_1(\rho_2(e_6))
		=2t_2$.\\
		So \hspace{4.5cm} $a_1=b_1=0$ and $x_2=-1$ and $t_2=0$.\\
		As a consequence, we get :
		\begin{center}
			$\rho_2(e_3)=\begin{pmatrix}
				-1 & -x_1 
				\\
				x_1 & -1 
			\end{pmatrix}
			$,\; $\rho_2(e_4)=0
			$,\; $\rho_2(e_5)=0
			$,\; $\rho_2(e_6)=\begin{pmatrix}
				0 & -t_1 
				\\
				t_1 & 0 
			\end{pmatrix}
			$
		\end{center}
		and
		\begin{center}
			$\rho_1(e_1)= \begin{pmatrix}
				0 & 0 & 0 & 0 
				\\
				0 & 0 & -a_{2} & 0 
				\\
				0 & a_{2} & 0 & 0 
				\\
				0 & 0 & 0 & 0 
			\end{pmatrix}
			$,\;
			$\rho_1(e_2)=\begin{pmatrix}
				0 & 0 & 0 & 0 
				\\
				0 & 0 & -b_{2} & 0 
				\\
				0 & b_{2} & 0 & 0 
				\\
				0 & 0 & 0 & 0 
			\end{pmatrix}$.
		\end{center}
		A calculation show that the non-zero brackets on the balanced Hermitian Lie algebra $\mathbb{R}^2 \Join \mathfrak{rr}_{3,1}$ are defined by :
		\begin{center}
			$[e_1,e_3]=e_1-x_1e_2, \quad [e_1,e_6]=-t_1e_2, \quad [e_2,e_3]=x_1e_1+e_2, \quad [e_2,e_6]=t_1e_1, \newline [e_3,e_4]=e_4, \quad [e_3,e_5]=e_5$,
		\end{center}
		and result follows.		
	\end{proof}
	
	\subsection{$\mathrm{aff(\R)\Join\G_2}$  with  $\dim(\G)=4$}
	Let  $\mathrm{aff}(\R)=span \{e_1,e_2\}$ with $[e_1,e_2]=e_1$, $\om_1=e^{12}$ and  $\J_1(e_1)=e_2$ and $(\mathfrak{g},\om_2,\J_2)$ be a four-dimensional Hermitian Lie algebra, 
	The Lie algebra $\G$ is of dimension four, so its associated Lee form $\theta_2$ satisfy $\md \om_2=\theta_2 \wedge \om_2$. In addition to that $\theta_2=\tr_1(\rho_2(.))$ is a closed 1-form. As a consequence $(\G,\J_2,\K_2)$ is LCK. On the other hand, LCK structures on four-dimensional Lie algebras are classified in \cite{DA}. We investigate this classification by twisting  $\mathrm{aff(\R)}$ with each  of its Lie algebras.
	\begin{theo}\label{Th3.2}
		Balanced Hermitian twisted cartesian products Lie algebras of type $(\mathrm{aff}(\R)\Join\G,\omega,\J )$ where $\dim(\mathfrak{g)}=4$ are described as follows:
		\begin{itemize}
			\item $\mathrm{aff}(\R) \rtimes_1 \mathfrak{d}_{4,2}:$
			\begin{itemize} \item []
			 $(-12,0,2\times36,-46,56-34,0)$
			
				\item [] $\J(e_1)=e_2,\, \J(e_3)=-e_5,\, \J(e_4)=-e_6.$
				\item [] $\om=e^{12}-e^{35}-\sqrt{\sigma+\om_{22}}\,e^{36}-\sqrt{\sigma+\om_{22}}\,e^{45}-\om_{22}e^{46}$ with $\sigma<0,\newline   \om_{22}\geq-\sigma$.
			\end{itemize}
			\item $\mathrm{aff}(\R) \rtimes_2 \mathfrak{d}_{4,2}:$
			\begin{itemize} \item []
			 $(-12,0,2\times36,-46,56-34,0)$
			
				\item [] $\J(e_1)=e_2,\, \J(e_3)=-e_5,\, \J(e_4)=-e_6.$
				\item [] $\om=e^{12}-e^{35}+\sqrt{\sigma+\om_{22}}\,e^{36}+\sqrt{\sigma+\om_{22}}\,e^{45}-\om_{22}e^{46}$ with $\sigma<0,\newline \om_{22}\geq-\sigma$.
			\end{itemize}
			\item $\mathrm{aff}(\R) \rtimes_3 \mathfrak{d}_{4,2}:$
			\begin{itemize}\item []
			 $(-12,0,2\times36,-46,56-34,0)$
			
				\item [] $\J(e_1)=e_2,\, \J(e_3)=\frac12 e_6,\, \J(e_4)=e_5.$
				\item [] $\om=e^{12}+2\om_{11}e^{36}+\om_{22}e^{45}$ with $\om_{11}>0,\, \om_{22}>0.$
			\end{itemize}
		\end{itemize}
	\end{theo}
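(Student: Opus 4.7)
The plan is to parallel the proof of Theorem~\ref{Th3.1}: specialize Theorem~\ref{theo1} to $\G_1=\mathrm{aff}(\R)$, force the shape of $\rho_1$ and $\rho_2$, and then sweep the four-dimensional Kähler Lie algebras of \cite[Table~2]{DA} one at a time. Since $\dim\G_1=2$, the fundamental form $\omega_1=e^{12}$ is automatically closed, so $(\mathrm{aff}(\R),\J_1,\mk_1)$ is Kähler and $\theta_1=0$. Theorem~\ref{theo1}(i) therefore reduces the balanced condition to $\X_{\rho_1}(x)=0$ for every $x\in\G_1$ together with $\theta_2(a)=\X_{\rho_2}(a)$ for every $a\in\G_2$.

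A short calculation then pins down $\rho_2$ completely: writing a generic derivation of $\mathrm{aff}(\R)$ in the basis $(e_1,e_2)$ as $\bigl(\begin{smallmatrix}\alpha & \gamma \\ 0 & 0\end{smallmatrix}\bigr)$ and imposing $[D,\J_1]=0$ forces $\alpha=\gamma=0$, so $\rho_2\equiv 0$ and consequently $\X_{\rho_2}=0$. The balanced condition collapses to $\theta_2=0$; since $\dim\G_2=4$, the identity $\md\omega_2=\theta_2\wedge\omega_2$ (valid in dimension four) yields balanced $\Longleftrightarrow$ Kähler. Hence $\G_2$ must be a four-dimensional Kähler Lie algebra drawn from \cite[Table~2]{DA}, and the only remaining datum is a Lie-algebra morphism $\rho_1:\mathrm{aff}(\R)\to\mathrm{Der}(\G_2)$ with values in the centralizer of $\J_2$, of vanishing trace, satisfying the single relation $\rho_1(e_1)=[\rho_1(e_1),\rho_1(e_2)]$.

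For each candidate $\G_2$ I would compute the subspace of $\J_2$-commuting traceless derivations and search for pairs $(A,B)=(\rho_1(e_1),\rho_1(e_2))$ with $[B,A]=-A$; the resulting non-zero brackets on $\G_{(\rho_1,\rho_2)}$ and its Hermitian data are then read off from $\rho_1$ exactly as in the sample calculation for $\R^2\Join\mathfrak{rr}_{3,1}$ inside the proof of Theorem~\ref{Th3.1}. The main obstacle is precisely this case-by-case sweep: for nearly every four-dimensional Kähler Lie algebra the relevant subspace of derivations is too small to carry a non-trivial copy of $\mathrm{aff}(\R)$, so only $\G_2=\mathfrak{d}_{4,2}$ survives and even there $\rho_1=0$ is forced, producing the direct product $\mathrm{aff}(\R)\times\mathfrak{d}_{4,2}$. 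The three listed items then correspond to the three inequivalent pairs $(\J,\omega)$ that remain on this product after the trace and commutation constraints and the standard basis changes used to normalise the parameters: two share the complex structure $\J(e_3)=-e_5$, $\J(e_4)=-e_6$ but carry distinct Kähler metrics, while the third uses $\J(e_3)=\tfrac12 e_6$, $\J(e_4)=e_5$. A direct check that $\md\omega=0$ in each of these three cases closes the argument.
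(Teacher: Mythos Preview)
Your overall strategy is correct and matches the paper's one--line proof (``similar to Theorem~\ref{Th3.1}''): reduce via Theorem~\ref{theo1} and sweep the four--dimensional LCK list from \cite{DA}. Your observation that every $\J_1$--commuting derivation of $\mathrm{aff}(\R)$ vanishes, forcing $\rho_2\equiv 0$ and hence $\theta_2=0$, is a genuine simplification that the paper does not spell out.

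There is, however, a muddle in how you explain why only $\mathfrak{d}_{4,2}$ appears. You write that ``for nearly every four-dimensional K\"ahler Lie algebra the relevant subspace of derivations is too small to carry a non-trivial copy of $\mathrm{aff}(\R)$, so only $\G_2=\mathfrak{d}_{4,2}$ survives.'' This cannot be the mechanism: once $\theta_2=0$ holds, the choice $\rho_1=0$ is always admissible (it trivially satisfies $[\rho_1(e_1),\rho_1(e_2)]=\rho_1(e_1)$ and $\X_{\rho_1}=0$), so \emph{every} K\"ahler $(\G_2,\J_2,\omega_2)$ would survive by that criterion. The actual filter is the constraint $\theta_2=0$ itself, applied to the specific LCK data $(\G_2,\J_2,\omega_2,\theta_2)$ catalogued in \cite[Table~2]{DA}. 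Running through that table (exactly as in the proof of Theorem~\ref{Th3.1}) one finds that the Lee form vanishes only for $\mathfrak{d}_{4,\lambda}$ at the parameter value $\lambda=2$, and there only for three of the four listed complex structures: for the structures corresponding to $\rJoin_1,\rJoin_2,\rJoin_4$ one has $\theta_2(e_6)=\lambda-2$, while for $\rJoin_3$ one has $\theta_2(e_6)=-(\lambda+1)\neq 0$. This is why exactly three items appear. The subsequent check that $\rho_1=0$ is \emph{forced} (the $\J_2$--centralising traceless derivations of $\mathfrak{d}_{4,2}$ reduce to $0$) is correct and ensures there are no further families beyond the direct product; but it is not what singles out $\mathfrak{d}_{4,2}$ in the first place.

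Finally, your closing ``direct check that $\md\omega=0$'' is fine here (with $\rho_1=\rho_2=0$ and both factors K\"ahler the product is indeed K\"ahler, hence balanced), but be aware that in general the target is $\theta=0$, not $\md\omega=0$; the two happen to coincide only because this case degenerates to a product of K\"ahler algebras.
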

	\begin{proof}
		The proof is similar to Theorem \ref{Th3.1}.
	\end{proof}
	\begin{conclu}
		By Theorem \ref{Th3.1} we conclude that all the Hermitian twisted cartesian products $\R^2\Join \G$ carries a balanced structure, contrary to Theorem \ref{Th3.2} the $\mathrm{aff}\Join \mathfrak{d}_{4,2}$ is the only one.
	\end{conclu}

\end{document}